\newtheorem{thm}{Theorem}[section]
\newtheorem{lemma}[thm]{Lemma}
\newtheorem{sublemma}[thm]{Sublemma}
\newtheorem*{thm*}{Theorem}
\newtheorem{cor}[thm]{Corollary}
\newtheorem{?}[thm]{Question}
\theoremstyle{definition}
 \newtheorem{definition}[thm]{Definition}
\newcommand{\R}{\mathbb{R}}
\newcommand{\N}{\mathbb{N}}
\newcommand{\s}{\sigma}
\newcommand{\nin}{\notin}
\newcommand{\twow}{2^{<\omega}}
\newcommand{\empstr}{\langle \ \rangle}
\newcommand{\emp}{\emptyset}
\newcommand{\conc}{{}^{\smallfrown}}
\newcommand{\MLR}{\textbf{MLR}}
\newcommand{\CR}{\textbf{CR}}
\newcommand{\PIVR}{\textbf{PIVR}}
\newcommand{\IVR}{\textbf{IVR}}
\newcommand{\FVR}{\textbf{FVR}}
\newcommand{\SVR}{\textbf{SVR}}
\renewcommand{\restriction}{\mathord{\upharpoonright}}
\newcommand\restr[2]{{
  \left.\kern-\nulldelimiterspace 
  #1 
  \restriction 
  \right._{#2} 
  }}
\title{Lowness for Integer-Valued Randomness}
\author{Ian Herbert\footnote{Department of Mathematics, National University of Singapore, Singapore; iherbert@nus.edu.sg}}
\date{}
\begin{document}

\maketitle
\begin{abstract} 
A real is called \emph{integer-valued random} if no integer-valued martingale can win arbitrarily much capital betting against it. A real is \emph{low for integer-valued randomness} if no integer-valued martingale recursive in $A$ can succeed on an integer-valued random real. We show that lowness for integer-valued randomness coincides with recursiveness, as is the case for computable randomness.  
\end{abstract}


\section{Introduction}
Capturing the notion of `randomness' can be philosophically complex. When we say, for example, that an infinite sequence is random do we mean that it satisfies certain statistical tests, or that it can't be efficiently compressed, or perhaps that it is generated by a source that we believe to be `truly random?' There are several mathematical approaches to formalizing the notion, and the interactions between the various definitions has been a fruitful field of study for some time. In this paper, we focus on an approach that sees randomness as unpredictability and interprets predictions as wagers: a sequence is random if it is impossible to win an arbitrary amount of money by betting on it.

Our central concept will be that of a martingale, which we interpret as a betting strategy. A gambler is presented with the bits of an infinite binary sequence and based on what he has seen so far he wagers some amount (possibly $0$) of capital on whether the next bit will be $0$ or $1$. If he is correct, he receives the value of his wager back as winnings and if incorrect he loses his wager. Thus, if the game is fair, the expected value of his wealth after a wager should equal his wealth before the wager. To speak more formally, we first introduce some notation. 

We use $\twow$ to denote the set of finite binary strings and  $2^{\omega}$ for the set of infinite binary sequences, identified with the characteristic functions of sets of natural numbers. We use the symbol ``$\conc$" to denote the operation of concatenation on $\twow$, omitting it where there will be no confusion, and the symbol `$\prec$' to denote the initial segment relation on $\twow \times \twow$ and $\twow \times 2^{\omega}$. We denote the restriction of an element $A \in 2^{\omega}$ to its finite initial segment of length $n$ by $\restr{A}{n}$. We use $\R_{\geq 0}$ for the set of non-negative real numbers. We use $\leq_T$ to denote Turing reducibility and $\emp'$ to denote the halting set. Now we formally define martingales.

\begin{definition}
A \emph{martingale} is a function $M\colon \twow \rightarrow \R_{\geq 0}$ such that for any $\s\in\twow$, $M(\s)=\frac{1}{2}M(\s 0)+\frac{1}{2}M(\s 1)$. 
\end{definition}

Here $M(\s)$ corresponds to a gambler's capital when following the strategy given by $M$ after having seen (and bet on) the string $\s$. The value of the wager he makes after seeing $\s$ is the difference $|M(\s)-M(\s 0)|$, and whether $M(\s 0)$ or $M(\s 1)$ is greater tells us which direction he bets on. We say a martingale $M$ \emph{succeeds} on a set $A$ if and only if $\sup\limits_{s\rightarrow \infty} M(\restr{A}{s})=\infty$, that is, following $M$ allows the gambler to win arbitrarily much money. We define the \emph{success class} of a martingale $M$ to be $\text{Succ}(M)=\{A: M \text{ succeeds on } A\}$. Under this paradigm, a real is random for a class of martingales if it is not in the success class of any martingale in the class.

Varying the collection of martingales we consider (in terms of effectiveness and acceptable bets) will yield different notions of randomness. The randomness notion that is probably most well-studied by recursion theorists is Martin-L\"{o}f Randomness, which, though defined in terms of effective null-classes, is equivalent to not being in the success class of any recursively enumerable martingale \cite{schnorrunifiedapproach}. The class of such sets is denoted $\textbf{MLR}$. A slightly weaker notion is Computable Randomness, i.e. not being in the success class of any \emph{recursive} martingale. We denote the class of computably random sets $\textbf{CR}$. For a deeper study of these notions and more on algorithmic randomness the reader is directed to \cite{nies} or \cite{downhirsch}. In this paper we are concerned with an even weaker, but perhaps more realistic notion. 

As a motivation, consider trying to follow a betting strategy in a casino. As soon as our betting strategy calls for us to wager some small fraction of a cent or some amount larger than the maximum bet at the table, we will run into trouble. Since the casino has the wealth, if we wish to gamble we must play by their rules. As such, we can only reasonably follow betting strategies that call for wagers of certain sizes. For a set $T\subseteq \R$, we call a martingale, $M$, \emph{T-valued} if for any $\s\in\twow$, if there is an $a\in T$ such that $a\leq M(\s)$ then $|M(\s)-M(\s 0)|\in T $ and if there is no such $a$ then $M(\s)=M(\s  0)=M(\s 1)$. Essentially, the size of $M$'s wagers is always an element of $T$ unless $M$'s capital is less than the smallest element of $T$, in which case it cannot wager. This latter condition is obviously superfluous when $0\in T$. 

The simplest such case is the case $T=\N$. Such martingales are called \emph{integer-valued}. Various weakenings have also been considered. A martingale $M$ is called \emph{finite-valued} if it is $T$-valued for some finite $T\subset \N$. $M$ is called \emph{single-valued} if it is $T$-valued for some $T=\{a\}$. One the other hand, we can attain a stronger notion by loosening the condition on the totality of the martingale, allowing it to be only partial recursive. A partial martingale is understood to have a downwards closed domain and to satisfy the martingale condition where it is defined. We define the various randomness notions below. 

\begin{definition}
\begin{itemize}
\item A set $A$ is \emph{integer-valued random} if no recursive integer-valued martingale succeeds on $A$. 

\item A set $A$ is \emph{finite-valued random} if no recursive finite-valued martingale succeeds on $A$. 

\item A set $A$ is \emph{single-valued random} if no recursive single-valued martingale succeeds on $A$.

\item A set $A$ is \emph{partial integer-valued random} if no partial recursive integer-valued martingale succeeds on $A$.
\end{itemize}
\end{definition}

We use $\IVR$, $\textbf{FVR}$, $\textbf{SVR}$, and $\textbf{PIVR}$ to denote the classes of random sets, respectively. It is clear that $\textbf{CR} \subseteq \IVR \subseteq \textbf{FVR} \subseteq \textbf{SVR}$, and it is a result of Bienvenu, Stephan, and Teutsch \cite{ivmartingales} that all of these inclusions are in fact strict. $\textbf{PIVR}$ is a  clearly contained in $\IVR$, and Barmpalias, Downey, and McInerney showed that this inclusion is also strict \cite{barmdownmcinivrs}.

Because the definitions of these randomness notions depend on defeating martingales with a certain effectiveness requirement, they can be relativized to a set by allowing the martingales under consideration to have oracle access to that set. For sets $A, T\subseteq \N$, we (awkwardly) call a set, $B$,  \emph{A-T-valued random} if no $T$-valued martingale recursive in $A$ succeeds on $B$, and we make similar definitions for \emph{$A$-integer-valued random}, etc. We use $\IVR^A$, $\textbf{FVR}^A$, $\textbf{SVR}^A$, and $\textbf{PIVR}^A$ to denote the randomness classes relativized to $A$ as an oracle. Since a martingale with oracle access to $A$ can choose not to ever use its oracle, it is clear that for any $A$, $\IVR \subseteq \IVR^A$, and similarly for the other notions. 

The main result of this paper concerns those sets for which the inclusion reverses, that is, which do not compute any martingales that can succeed on an integer-valued random set. For a randomness notion $\textbf{R}$, a set $A$ is called \emph{low for \textbf{R}} if $\textbf{R}=\textbf{R}^A$. The class of all such sets is denoted $\text{Low(\textbf{R})}$. In general these classes can be complicated. Probably the most famous example is $\text{Low(\textbf{MLR})}$, the sets that are low for Martin-L\"{o}f Randomness, which are commonly called $K$-trivial and have a vast menagerie of equivalent characterizations. On the other hand, combining results of Nies \cite{lowforkktrivial} and Bedregal and Nies \cite{bedregalnies} one can show that $\text{Low}(\textbf{CR})$ contains only the recursive sets. We show that integer-valued randomness behaves more like computable randomness, and $\text{Low(\IVR)}$ likewise consists of only the recursive sets. 

\section{The Theorem}

\begin{thm}\label{lowforivr}
If $A >_{T} \emp$, then $A$ is not in $\emph{\text{Low(\IVR)}}$. 
\end{thm}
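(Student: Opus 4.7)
The plan is to exploit the non-recursiveness of $A$ to build a single set $X$ together with an $A$-recursive integer-valued martingale that succeeds on it, while arranging that no recursive integer-valued martingale does; this directly witnesses $X \in \IVR \setminus \IVR^A$ and hence $A \notin \text{Low}(\IVR)$.

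The construction rests on a simple ``freezing'' observation: if $M$ is an integer-valued martingale and $M(\s) < 1$, then any bet $b \in \N$ with $b \geq 1$ forces one of $M(\s 0), M(\s 1)$ to be negative, so the only legal bet at $\s$ is $0$. Thus $M(\s 0) = M(\s 1) = M(\s)$, and by induction $M$ is constant on every extension of $\s$. In particular, once the value of a recursive integer-valued martingale $M_e$ along an initial segment of $X$ drops below $1$, it cannot recover, and $\sup_n M_e(\restr{X}{n})$ is already capped.

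Enumerate the recursive integer-valued martingales as $M_0, M_1, \ldots$ (handling indices whose underlying partial recursive function is not a total integer-valued martingale by standard stagewise approximation). We build $X$ in stages with endpoints $0 = n_0 < n_1 < \ldots$\,. Given $\restr{X}{n_e}$, set $c := M_e(\restr{X}{n_e})$ and process positions $i \geq n_e$ one at a time: compute $M_e$'s bet at $\restr{X}{i}$; if the bet is $\geq 1$ (``attack''), set $X(i)$ to the losing direction for $M_e$; if the bet is $0$ (``coding''), set $X(i) = A(k)$ where $k$ is the next unused coding index. Each attack reduces $M_e$'s capital by at least $1$, so by non-negativity there are at most $\lceil c \rceil$ attack positions in the stage; after $M_e$ is frozen, we close the stage once one further (coding) position has been recorded, defining $n_{e+1}$. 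The whole construction is $A$-recursive.

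Let the coding positions produced be $q_0 < q_1 < \ldots$. Define $M^A$ by prescribing its bet at level $i$ independently of the string content: $0$ unless $i = q_k$, in which case $1$ in the direction $A(k)$, with the standard fallback of betting $0$ whenever the current capital is below $1$. Then $M^A$ is a valid $A$-recursive integer-valued martingale, and along $X$ every coding bet is won, so $M^A(\restr{X}{n}) \to \infty$ and $X \notin \IVR^A$. Conversely, every recursive integer-valued martingale equals some $M_e$, and by the end of stage $e$ its capital along $X$ is below $1$ and frozen, so $\sup_n M_e(\restr{X}{n})$ is finite and $X \in \IVR$.

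The main technical obstacle is guaranteeing that every stage makes coding progress despite the a priori large value of $M_e(\restr{X}{n_e})$, which could have swelled arbitrarily during stages $e' < e$ when we were not attacking $M_e$; the freezing observation is exactly what bounds the number of attack positions needed per stage by a finite quantity and thereby permits infinitely many coding positions overall. A secondary technicality is uniformly handling indices whose partial recursive function is not a total integer-valued martingale, which is standard and does not affect the verification, since such indices do not correspond to genuine recursive integer-valued martingales.
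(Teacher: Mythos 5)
Your ``freezing'' observation is correct (an integer-valued martingale whose capital drops below $1$ can only ever bet $0$ afterwards), and the per-stage accounting that bounds the number of attack positions is fine. But the proof has a genuine gap, and it is hidden exactly in the phrase ``handling indices whose underlying partial recursive function is not a total integer-valued martingale by standard stagewise approximation.'' There is no effective enumeration of the total recursive integer-valued martingales, so your stage-$e$ module must be run against an arbitrary partial recursive $\phi_e$; to ``compute $M_e$'s bet at $\restr{X}{i}$'' you must first decide whether $\phi_e$ converges there (and whether it continues to look like an integer-valued martingale), and that decision requires $\emp'$. Hence the construction of $X$, and with it the sequence of coding positions $q_0<q_1<\cdots$, is only recursive in $A\oplus\emp'$, not in $A$. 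Since your martingale $M^A$ must locate the $q_k$ in order to place its bets, $M^A$ is only $(A\oplus\emp')$-recursive, which shows $X\notin\IVR^{A\oplus\emp'}$ but not $X\notin\IVR^A$ --- and the latter is what the theorem needs. This is not a routine technicality: it is the reason the difficulty cannot be localized the way you have localized it.

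The natural repairs also fail, which is worth seeing because it explains the shape of the paper's argument. If instead you make $M^A$ independent of the construction by having it bet $1$ at \emph{every} position that the next bit agrees with $A$ (this is the paper's $M$, and it is genuinely $A$-recursive), then $M^A$ also bets at your attack positions, where $X$ may disagree with $A$; the number of attack positions in stage $e$ is bounded only by $M_e(\restr{X}{n_e})$, which can be enormous (e.g.\ $M_e$ may have doubled its capital at every step of earlier stages), so the losses at attack positions can swamp the one-quantum gains at coding positions and bankrupt $M^A$. If instead you fix the coding positions in advance (say, code $A(k)$ at position $2k$ and attack only at odd positions), then, since the only hypothesis on $A$ is non-recursiveness, $A$ itself may be highly non-random and a recursive integer-valued martingale betting only at the coding positions could succeed on $X$. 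The paper's proof is organized precisely around this obstruction: its $A$-recursive martingale codes $A$ at every bit, and the verification tracks the ratio $G_e/\phi_e$ and argues that a $\phi_e$ which persistently mirrors those coding bets would compute $A$, so it must infinitely often abstain or bet against $A$, which is what lets the coding gains outpace the attack losses. Your proposal contains no substitute for that argument.
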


\begin{proof}
We construct from $A >_T \emp$ an integer-valued martingale $M$ that succeeds on a set $B\in \IVR$. 

The martingale $M$ is very easily constructed. Simply put, it is the martingale that for each $n$, bets $1$ quantum that the $n$th bit it sees will be the $n$th bit of $A$ (unless it has run out of capital) and for concreteness starts with initial capital $1$. More formally, $M(\empstr)=1$ and if $M(\s)\geq 1$ then $M(\s \conc A(|\s|) )=M(\s)+1$ and $M(\s \conc \overline{A}(|\s|))=M(\s)-1$, and otherwise $M(\s\conc i)=0$. Here and further $\empstr$ denotes the empty string, the smallest element of $\twow$. It is clear that this $M$ is an integer-valued martingale recursive in $A$. 

The construction of $B$ will be more complicated. We want $B$ to be `random enough' that any recursive integer-valued martingale will fail to win along $B$, while still including enough bits of $A$ that $M$ can succeed on it. We note that, since the only hypothesis on $A$ is that it is not recursive, $A$ itself could be very far from random. We will build $B$ Turing-below $A\oplus \emp'$, at each stage $s$ deciding the next bit of $B$ in trying to defeat one of the integer-valued martingales. We use $(\phi_e)_{e \in \omega}$ as an effective listing of all partial recursive functions from $\twow$ to $\N$. Several of these will not give integer-valued martingales, but these will be identified as the construction progresses and subsequently ignored. 

The idea of the construction is to construct $B$ such that each $\phi_e$ either loses all its capital or stops betting on any further bits, while ensuring that $M$'s capital goes to infinity. For clarity we will partition $M$'s capital, $M(\s)$, for each $\s \prec B$, into a Reserve Bank $R(\s)=1$ and Gamblers $G_0(\s), G_1(\s), G_2(\s) \hdots $ (only finitely many of which will be nonzero for a given $\s$), each trying to defeat a single partial recursive function. If we ensure that infinitely many $G_i$ are eventually at least $1$ on a cofinal segment of the set $B$ then $M$ must succeed on $B$. Each $G_i$ will start with value $0$, but will eventually be initialized and given capital by the construction. We know that the martingale $M$ bets $1$ quantum at each bit, so at each stage at most one $G_i$ will change its value, and that by at most $1$ quantum. We choose each bit of $B$ in order to defeat some $\phi_e$ and so the gain or loss of $1$ quantum worth of capital can be credited or debited to the $G_e$ on whose behalf we make this decision. 

The strategy for defeating a single $\phi_e$ will be relatively simple. As long as $G_e(\s)$ is less than $\phi_e(\s)$ we choose bits of $B$ to increase the proportion $\frac{G_e}{\phi_e}$ as much as possible, using the convention that $\frac{n}{0}=\infty$ for any $n \geq 0$ (it is good for us if both $\phi_e$ and the Gambler $G_e$ go bankrupt; we can always give $G_e$ more capital from the bank). If the proportion is the same in both directions, we choose the direction that decreases both $G_e$ and $\phi_e$. The effect on this proportion of extending $B$ in either direction can be computed using $A$ to compute $M$, and so the change in $G_e$, and using $\emp'$ to determine whether $\phi_e(\s\conc 1)$ and $\phi_e(\s \conc 0)$ will converge (if they do we can just wait for $\phi_e$ to give their values). In the case that one of these does not converge, we extend $B$ as we wish and can henceforward ignore $\phi_e$. Once $G_e(\s) \geq \phi_e(\s)$ we can afford to choose the bit of $B$ that will decrease $\phi_e$ each time that $\phi_e$ bets. So, either it must stop betting or we can reduce its capital to $0$ along an initial segment of our $B$. 

The problem, of course, is getting from $G_e < \phi_e$ to $G_e \geq \phi_e$. As long as $\phi_e$ makes the same bets that $M$ does this ratio might remain unchanged and we may be unable to increase $M$'s capital or be forced to interfere with the actions of other strategies infinitely often. Luckily, this cannot happen. $M$ always bets that the bits we see will be the bits of $A$, and if the ratio is unchanging then $\phi_e$ must be matching these bets. However, this procedure for choosing bits of $B$ is recursive in $e$, the finite initial segment $\s$, and the capitals $G_e(\s)$ and $\phi_e(\s)$. Thus, if at every step $\phi_e$ bet on the next bit of $A$ we would have a procedure for computing a cofinal segment of the set $A$. Since $A$ is nonrecursive, this procedure must fail at some finite stage. Either $\phi_e$ fails to converge at some level, or it converges but does not bet or bets the bit of $\overline{A}$. In the first case we win easily, and in either of the other cases we gain an absolute advantage. Since this must happen infinitely often, we will eventually get $G_e(\restr{B}{m}) \geq \phi_e(\restr{B}{m})$. 

Weaving the strategies for the different $\phi_e$'s together is then a  matter of letting the earliest one of the strategies which has $G_e(\restr{B}{s})>0$ and either $\phi_e$ making a bet or $\phi_e$ not betting but $G_e(\restr{B}{s}) < \phi_e(\restr{B}{s})$ be the strategy which chooses how to extend $\restr{B}{s}$. If none of the strategies before the first uninitialized $G_i$ are in such a position (so in particular no $\phi_e$ that we are still concerned about is betting), we merely extend $B$ with the next bit of $A$ to get $1$ quantum more of capital that we use to initialize this $G_i$. We can think of the Reserve Bank $R$ fronting the capital for this bet and then donating it to $G_i$, since $G_i$ itself can't bet. The construction has some of the flavor of a priority argument, although it does not involve any sort of injury. 

We now give the formal construction of $B$, using $A \oplus \emp'$.

A partial recursive function $\phi_e$ is \emph{active} at a stage $s$ of the construction if for every $t<s$, $\phi_e(\restr{B}{t}) \downarrow$, $\phi_e(\restr{B}{t})=\frac{1}{2}\phi_e(\restr{B}{t} \conc 0)+\frac{1}{2}\phi_e(\restr{B}{t} \conc 1)$ when these all converge, $\phi_e(\restr{B}{t} \conc 0) \downarrow$ if and only if $\phi_e(\restr{B}{t} \conc 1)\downarrow$, and both $\phi_e(\restr{B}{t}\conc 0)$ and $ \phi_e(\restr{B}{t} \conc 1)$ are in $\N$, if they are defined. In essence, $\phi_e$ is active as long as it looks like at least a partial recursive integer-valued martingale that has converged along the current initial segment of $B$. 

We will say that a number $e$ \emph{requires attention} at a stage $s$ if any of the following hold:
\begin{enumerate}
\item $G_e(\restr{B}{s-1})=0$
\item $\phi_e$ is active at that stage and one of 
\begin{enumerate}
\item For both $i\in \{0,1\}$, $\phi_e(\restr{B}{s-1} \conc i) \uparrow$
\item For both $i\in \{0,1\}$, $\phi_e(\restr{B}{s-1} \conc i) \downarrow = \phi_e(\restr{B}{s-1}) > G_e(\restr{B}{s-1})$
\item For both $i\in \{0,1\}$, $\phi_e(\restr{B}{s-1} \conc i) \downarrow$ and $\phi_e(\restr{B}{s-1} \conc i) \neq \phi_e(\restr{B}{s-1})$.

\end{enumerate}
\end{enumerate}

The construction is:

\textbf{Stage $0$}: Set $\restr{B}{0}=\empstr$ and for all $i$, set $G_i(\empstr)=0$.

\textbf{Stage $s+1$}: For the smallest $e$ that requires attention, 
\begin{enumerate}
\item If $G_e(\restr{B}{s})=0$, or if $\phi_e$ is active and either for both $i$, $\phi_e(\restr{B}{s}\conc i) \uparrow$ or for both $i$, $\phi_e(\restr{B}{s} \conc i) \downarrow = \phi_e(\restr{B}{s}) > G_e(\restr{B}{s})$,
\begin{enumerate}
\item Let $B(s)=A(s)$,
\item Then $G_e(\restr{B}{s+1})=G_e(\restr{B}{s})+1$ and all other $G_d(\restr{B}{s+1})=G_d(\restr{B}{s})$. 
\end{enumerate}

\item If for both $i$, $\phi_e(\restr{B}{s} \conc i) \downarrow \neq \phi_e(\restr{B}{s})$, 
\begin{enumerate} 
\item Compute the ratios $[G_e(\restr{B}{s} )+1]/\phi_e(\restr{B}{s} \conc A(s))$ and $[G_e(\restr{B}{s})-1]/\phi_e(\restr{B}{s} \conc \overline{A}(s))$, using $\frac{n}{0}=\infty$ for any $n\geq 0$,
\item If  $[G_e(\restr{B}{s} )+1]/\phi_e(\restr{B}{s} \conc A(s)) > [G_e(\restr{B}{s})-1]/\phi_e(\restr{B}{s} \conc \overline{A}(s))$, then let $B(s)=A(s)$, $G_e(\restr{B}{s+1})=G_e(\restr{B}{s})+1$ and all other $G_d(\restr{B}{s+1})=G_d(\restr{B}{s})$, 
\item Otherwise, let $B(s)=\overline{A}(s)$, $G_e(\restr{B}{s+1})=G_e(\restr{B}{s})-1$ and all other $G_d(\restr{B}{s+1})=G_d(\restr{B}{s})$.

\end{enumerate}
\end{enumerate}

This ends the construction. It remains to verify that $B$ is in $\IVR$ but not in $\IVR^{A}$. To show that no integer-valued martingale succeeds on $B$, we show that each $e$ will require attention only finitely often in the construction. Since every time $\phi_e$ bets on a bit $e$ requires attention, this will ensure that $\phi_e$ bets only finitely often, and so cannot win infinitely much capital. 

\begin{lemma}\label{finiteattention}
For any $e\in \N$, $e$ requires attention at only finitely many stages of the construction.
\end{lemma}

\begin{proof}
We take an $e \in \N$ and assume for induction that the hypothesis holds for all $d<e$. Then we may assume we are at some stage $s$ such that no $d<e$ will ever require attention at a stage later than $s$. Thus, if $e$ requires attention at a stage $t>s$ we will act on $e$'s behalf at that stage. If $\phi_e$ ever ceases to be active, it will remain inactive for the rest of the construction, and so $e$ will only require attention due to clause 1.), and that only once. 

If at any stage $t>s$ $e$ requires attention due to clause 2b.), then $\phi_e$ is partial along $B$ and will not be active after stage $t$. Acting on $e$'s behalf at this stage $t$ will increase $G_e$ to at least $1$, so $e$ will never require attention again. 

If, at a stage $t>s$, $e$ requires attention due to clause 1.), then $B(t)=A(t)$ and $G_e(\restr{B}{t})=1$, so in general $e$ will not require attention for this reason again. There is only one possible exception: at some later stage $r>t$, $e$ requires attention due to clause 2c.) (in any of the other cases $G_e$ increases) and $G_e(\restr{B}{r})=1$ and $\phi_e(\restr{B}{r} \conc \overline{A}(r))=0$. In this case, the ratio $G_e(\restr{B}{r})-1 / \phi_e(\restr{B}{r} \conc \overline{A}(r))=\infty$ will be the largest possible, and so the construction will set $B(r)=\overline{A}(r)$ and both $G_e$ and $\phi_e$ will lose all their capital. Then at stage $r+1$ $e$ will require attention due to clause 1.) again and $G_e$ will be given its new $1$ quantum of capital. However, since $\phi_e(\restr{B}{r})=0$, as long as $\phi_e$ obeys the martingale property it can no longer bet, so it will no longer cause $e$ to require attention, either through becoming inactive or through never betting.  

Thus, if $e$ requires attention at infinitely many stages it must be the case that at infinitely many of these stages $e$ requires attention due to clauses 2b.) and 2c.). We make a small sublemma about how the size of $\phi_e$'s bet in clause 2c.) effects our action at a given stage.

\begin{sublemma}\label{nratio}
Let $r$ be a stage when $e$ is the least number that requires attention and for which there is an $n>0$ such that $\phi_e(\restr{B}{r-1} \conc A(r))= \phi_e(\restr{B}{r-1})+n$. Then $B(r)=A(r)$ if and only if $G_e(\restr{B}{r-1})/\phi_e (\restr{B}{r-1})<\frac{1}{n}$. 
\end{sublemma}

\begin{proof}

If $G_e(\restr{B}{r-1}) / \phi_e(\restr{B}{r-1}) < \frac{1}{n}$, then $n\cdot G_e(\restr{B}{r-1}) < \phi_e(\restr{B}{r-1})$ and so $-\phi_e(\restr{B}{r-1})<-n\cdot G_e(\restr{B}{r-1})$. We add these two inequalities to get that $n\cdot G_e(\restr{B}{r-1})-\phi_e(\restr{B}{r-1}) < \phi_e(\restr{B}{r-1})-n\cdot G_e(\restr{B}{r-1})$. We now add the product $G_e(\restr{B}{r-1})\cdot \phi_e(\restr{B}{r-1})$ to and subtract the number $n$ from both sides, getting $G_e(\restr{B}{r-1})\cdot \phi_e(\restr{B}{r-1})+ n\cdot G_e(\restr{B}{r-1})-\phi_e(\restr{B}{r-1})  -n < G_e(\restr{B}{r-1})\cdot \phi_e(\restr{B}{r-1})+\phi_e(\restr{B}{r-1})-n\cdot G_e(\restr{B}{r-1}) -n $. This factors to $[G_e(\restr{B}{r-1})-1][\phi_e(\restr{B}{r-1})+n]< [G_e(\restr{B}{r-1})+1][\phi_e(\restr{B}{r-1})-n]$, which gives us the inequality $\frac{G_e(\restr{B}{r-1})-1}{\phi_e(\restr{B}{r-1})-n}<\frac{G_e(\restr{B}{r-1})+1}{\phi_e(\restr{B}{r-1})+n}$. Since we choose the next bit of $B$ depending on which of these two ratios is larger, in this case we extend by $B(r)=A(r)$.

We can similarly show that if $G_e(\restr{B}{r-1})/\phi_e(\restr{B}{r-1})\geq \frac{1}{n}$, then the inequality $\frac{G_e(\restr{B}{r-1})-1}{\phi_e(\restr{B}{r-1})-n} \geq \frac{G_e(\restr{B}{r-1})+1}{\phi_e(\restr{B}{r-1})+n}$ holds, and so we would extend $B$ with $B(r)=\overline{A}(r)$. 

We note that in either case a similar argument shows that $[G_e(\restr{B}{r})/\phi_e(\restr{B}{r})] \geq [G_e(\restr{B}{r-1})/\phi_e(\restr{B}{r-1})]$   
\end{proof}

Back to the proof of Lemma~\ref{finiteattention}, we note that if we are ever at a stage $r>s$ when $G_e(\restr{B}{r-1})\geq \phi_e(\restr{B}{r-1})$, then, since no earlier $d$ will effect these quantities, $e$ will never again require attention due to clause 2b.), and every time it requires attention due to clause 2c.), we will achieve $\phi_e(\restr{B}{r})<\phi_e(\restr{B}{r-1})$. Either $\phi_e$ will bet that the next bit will be $\overline{A}(r)$, in which case we will extend $B(r)=A(r)$ and reduce $\phi_e$ by its bet and increase $G_e$ by $1$, which increases the ratio of these quantities, or $\phi_e$ will bet that the next bit is $A(r)$, in which case by the Sublemma, since $G_e(\restr{B}{r-1})/\phi_e(\restr{B}{r-1})\geq 1$ it is not less than $\frac{1}{n}$ for $\phi_e$'s bet $n$ and so we will extend by $B(r)=\overline{A}(r)$ and $\phi_e$ will lose its bet. Since this is a strict decrease, $e$ can only require attention finitely many more times after this inequality is achieved. 

All that remains to be shown is that $e$ cannot require attention infinitely often without achieving $G_e(\restr{B}{r-1})\geq \phi_e(\restr{B}{r-1})$. If there is ever a stage $t>s$ when $e$ does not require attention, then, since clause 2c.) does not hold, $\phi_e$ must not be betting, and since clause 2b.) does not hold, we must have that $G_e(\restr{B}{t-1})\geq \phi_e(\restr{B}{t-1})$. Thus, if $e$ requires attention infinitely often it must moreover require attention for a cofinal segment of the stages. We appeal to $A$'s nonrecursiveness to show this cannot happen.

If one has an initial segment $\restr{B}{r-1}$ of $B$, the finite values $G_e(\restr{B}{r-1})$ and $\phi_e(\restr{B}{r-1})$, and the knowledge that $e$ is the least number that requires attention at stage $r$ and that $\phi_e$ bets that the next bit will be $A(r)$, then according to the Sublemma one can compute (obviously) the bit $A(r)$, but also the bit $B(r)$ and the values $G_e(\restr{B}{r})$ and $\phi_e(\restr{B}{r})$. If one knows that these hypotheses hold for $r+1$, one could then repeat this process. Thus, if we were in the hypotheses of the Sublemma for all $r$ in the interval $[t,\infty)$, that is, if for all these $r$, $e$ is the least number that requires attention at stage $r$ and $\phi_e$ is betting that the next bit will be $A(r)$, then we could compute $A(r)$ for all $r>t$, using only a finite amount of information. Since $A$ is not recursive, this cannot happen, so it must be the case that we are only ever in the hypotheses of the Sublemma for finite intervals. That is, if $e$ requires attention infinitely often, it must be the case that at infinitely many of these stages it either does so due to clause 2b.), or it does so due to clause 2c.), but bets that the next bit is $\overline{A}(r)$. Importantly, this means that if $e$ requires attention infinitely often, then infinitely often we get an $r$ such that $G_e(\restr{B}{r})=G_e(\restr{B}{r-1})+1$ while $\phi_e(\restr{B}{r})\leq \phi_e(\restr{B}{r-1})$.

We now show that we must eventually achieve $G_e(\restr{B}{r-1})\geq \phi_e(\restr{B}{r-1})$. If we ever reach a stage $r$ such that $\phi_e$ never bets at any stage $t>r$, then for finitely many stages $e$ will require attention due to clause 2b.) and eventually the inequality will be satisfied. In the other case, $\phi_e$ must bet infinitely often. We show that even in this case there is eventually an $r$ such that $G_e(\restr{B}{r})\geq\phi_e(\restr{B}{r}) $. First, fix some stage $t>s$, and let $a=G_e(\restr{B}{t})$ and $b=\phi_e(\restr{B}{t})$. Let $n$ be largest such that $\frac{a}{b}<\frac{1}{n}$. We want to show that at some later stage $t'$, $G_e(\restr{B}{t'})/\phi_e(\restr{B}{t'})\geq\frac{1}{n}$. At any later stage $t'$ there will have been some number, $w(t')$, of stages when both $G_e$ and $\phi_e$ increased, some number $l(t')$, of stages where they both decreased, and some number $k(t')$ of stages where $G_e$ increased and $\phi_e$ either remained constant or decreased. We recall that according to our strategy (and as mentioned at the end of the proof of the Sublemma) the ratio $G_e(\restr{B}{t'})/\phi_e(\restr{B}{t'})$ is nondecreasing, so if $G_e(\restr{B}{t'})=0$ then so does $\phi_e(\restr{B}{t'})$. Now, at any stage after $t'>t$ such that $G_e(\restr{B}{t'})/\phi_e(\restr{B}{t'})$ is still less than $\frac{1}{n}$, if $G_e$ and $\phi_e$ both increase, then $\phi_e$ cannot have bet more than $n$ on the next bit being from $A$ at stage $t'$, while if $G_e$ and $\phi_e$ both decrease, then $\phi_e$ must have bet more than $n$. Thus, until $G_e(\restr{B}{t'})/\phi_e(\restr{B}{t'})>\frac{1}{n}$, at any stage $t'>t$, $G_e(\restr{B}{t'})\geq a +w(t')-l(t')+k(t')$ and $\phi_e\leq b+n\cdot w(t')-(n+1)\cdot l(t')$. Since, as above, we must infinitely often get stages where only $G_e$ increases, we eventually have a $t'$ such that $n\cdot(a+k(t'))\geq b$. For this $t'$, the ratio $G_e(\restr{B}{t'})/\phi_e(\restr{B}{t'})$ will be at least $\frac{a+k(t')+w(t')-l(t')}{b+n\cdot w(t')-(n+1)\cdot l(t')}\geq \frac{\frac{b}{n}+w(t')-l(t')}{b+n\cdot w(t')-(n+1)\cdot l(t')}\geq \frac{1}{n}$. Thus, eventually the ratio increases past $\frac{1}{n}$. We can now repeat this argument to show it eventually grows larger than $\frac{1}{n-1}$, and so on, and so it is eventually larger than $1$. 

This complete the proof of Lemma~\ref{finiteattention}. Each $e$ will require attention only finitely often. 
\end{proof}

Using Lemma~\ref{finiteattention} it is easy to see that $B$ will satisfy the required properties. Every time a $\phi_e$ makes a bet the number $e$ requires attention. Since any $e$ requires attention only finitely often, each $\phi_e$ must eventually stop betting along $B$, either because its capital is reduced to $0$ or because it simply never makes another bet. Since one can't win if one doesn't play, if $\phi_e$ bets only finitely often it is clear that $\sup_{s\rightarrow \infty} \phi_e(\restr{B}{s}) < \infty$. 

One the other hand, each $G_e$ is eventually greater than $0$, since having $G_e=0$ makes $e$ require attention. Since $M(\restr{B}{s})=1+\sum_e G_e(\restr{B}{s})$, it is clear that $M(B)=\sup_{s\rightarrow \infty} M(\restr{B}{s})=\infty$. Thus, $B\in \IVR$ while $B\nin \IVR^{A}$, and so $A$ is not $\text{Low}(\IVR)$. This completes the proof of Theorem~\ref{lowforivr}.

\end{proof}

We note that in the proof of Theorem~\ref{lowforivr} the martingale $M$ that we constructed relative to $A$ was in fact a single-valued martingale while the set $B$ we constructed defeated every partial integer-valued martingale, so we get the following corollary. 

\begin{cor}\label{lowforotherclasses} The classes $\text{Low}(\textbf{FVR})$, $\text{Low}(\textbf{SVR})$, and $\text{Low}(\textbf{PIVR})$ all consist of only the recursive sets. 

\end{cor}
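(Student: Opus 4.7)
The plan is to read off the corollary directly from the proof of Theorem~\ref{lowforivr} by exploiting two sharper features of that construction. First I would verify that the martingale $M$ defined by $M(\empstr)=1$ and the rule ``bet exactly one quantum on $A(|\s|)$'' is $\{1\}$-valued: whenever $M(\s)\geq 1$ the wager size $|M(\s)-M(\s\conc 0)|$ equals $1$, and otherwise $M$ does not bet at all. So $M$ is a single-valued martingale recursive in $A$, and therefore \emph{a fortiori} a finite-valued and an integer-valued (partial or total) martingale recursive in $A$.

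Next I would verify that the set $B$ built in the theorem actually lies in $\PIVR$, not merely in $\IVR$. The ``active'' predicate used in the construction is tailored precisely to the partial integer-valued martingale condition: $\phi_e$ remains active along $B$ as long as its values are in $\N$, obey the martingale identity where defined, and have $\phi_e(\restr{B}{s}\conc 0)\downarrow$ iff $\phi_e(\restr{B}{s}\conc 1)\downarrow$. Thus every partial recursive integer-valued martingale appears as some $\phi_e$ in the enumeration used by the construction, and Lemma~\ref{finiteattention} shows that each such $\phi_e$ requires attention only finitely often, so bets only finitely often along $B$ and cannot succeed on $B$. Hence $B\in\PIVR$, and via $\PIVR\subseteq \IVR\subseteq \FVR\subseteq \SVR$ we get $B\in \FVR$ and $B\in \SVR$ as well.

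Putting these together, the single martingale $M$ recursive in $A$ is simultaneously single-valued, finite-valued, and partial integer-valued, and it succeeds on $B$. So $B$ simultaneously witnesses $B\nin \SVR^{A}$, $B\nin \FVR^{A}$, and $B\nin \PIVR^{A}$, giving $A\nin \text{Low}(\FVR)\cup \text{Low}(\SVR)\cup \text{Low}(\PIVR)$. Since $A$ was an arbitrary nonrecursive set and every recursive set is trivially low for each of these notions (any martingale recursive in a recursive oracle is already recursive), each of $\text{Low}(\FVR)$, $\text{Low}(\SVR)$, and $\text{Low}(\PIVR)$ coincides with the recursive sets. There is no real obstacle here beyond careful bookkeeping: the content of the corollary is already latent in the theorem, and the only work is to confirm that $M$ falls inside the narrower single-valued class and that the construction of $B$ already handled partial recursive candidates.
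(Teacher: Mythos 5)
Your proposal is correct and is exactly the argument the paper intends: it observes that the martingale $M$ from Theorem~\ref{lowforivr} is in fact $\{1\}$-valued (hence single-valued, finite-valued, and integer-valued) and that the construction's ``active'' predicate already handles partial recursive candidates, so $B\in\PIVR\subseteq\IVR\subseteq\FVR\subseteq\SVR$ while $M$ succeeds on $B$. Your write-up just makes explicit the one-sentence remark the paper gives before the corollary.
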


\section{Further Directions}

In the study of lowness for various notions of randomness, one may more generally study also lowness for \emph{pairs} of notions of randomness. This generalized idea was first studied by Kjos-Hanssen, Nies, and Stephan in \cite{lowpairs}. For randomness classes $\textbf{Q} \subseteq \textbf{R}$, we define the class $\text{Low(\textbf{Q}, \textbf{R})}$ to be the class of all sets $A$ such that $\textbf{Q} \subseteq \textbf{R}^A$. That is, $\text{Low(\textbf{Q}, \textbf{R})}$ is the collection of sets such that every set that is random in the sense of $\textbf{Q}$ is, relative to $A$, at least still random in the sense of $\textbf{R}$. These sets are `too weak' computationally to make up the difference between $\textbf{Q}$ and $\textbf{R}$. If $\textbf{Q}=\textbf{R}$ this is just $\text{Low(\textbf{R})}$, but otherwise this class can have some interesting properties. For instance, Kjos-Hanssen, Nies, and Stephan proved that $\text{Low(\MLR, \textbf{SR})}$ is the class of c.e. traceable sets, where $\textbf{SR}$ is the class of Schnorr-random sets. 
By the same argument that gave us Corollary~\ref{lowforotherclasses}, it follows from Theorem~\ref{lowforivr} that the for any $\textbf{Q}\subseteq \textbf{R}$ where $\textbf{Q}$ and $\textbf{R}$ are taken from the classes $\PIVR$, $\IVR$, $\FVR$, and $\SVR$ we have that $\text{Low(\textbf{Q},\textbf{R})}$ consists of only the recursive sets. However, since integer-valued randomness is a natural weakening of computable randomness, it is reasonable to examine the class $\text{Low(\CR, \IVR)}$. Now, by Theorem~\ref{lowforivr}, $\text{Low(\IVR)}$ consists of only recursive sets, and by results of Nies and Bedregal \cite{bedregalnies}\cite{lowforkktrivial} the same is true of $\text{Low(\CR)}$. This means that any nonrecursive set $A$ will compute a recursive martingale that succeeds on a computably random set and an integer-valued martingale that succeeds on an integer-valued random set. This may suggest that $\text{Low(\CR, \IVR)}$ also contains only recursive sets, but \emph{a priori} there may be room for an $A$ that is non-recursive but weak enough in terms of computational power that its succeeding against these computable randoms depends in some essential way on the full power of a non-discrete betting strategy. We leave the nature of $\text{Low(\CR, \IVR)}$ as an open question.

\begin{?}What is $\text{Low(\CR, \IVR)}$?
\end{?}

\bibliographystyle{acm}
\bibliography{lowforivr}{}

\end{document}